\DeclareMathAlphabet{\mathpzc}{OT1}{pzc}{m}{it}
\newtheorem{theorem}{Theorem}
\newtheorem{corollary}{Corollary}
\newtheorem{remark}{Remark}
\newtheorem{example}{Example}
\newcommand{\be}{\begin{equation}}
\newcommand{\ee}{\end{equation}}
\begin{document}
\begin{frontmatter}
\title{A note on mean-value properties of harmonic functions on the hypercube}

\author[FMI]{P. P.~Petrov\corref{cor1}}%
\address[FMI]{Faculty of Mathematics and Informatics, Sofia University, 5 James Bourchier blvd., 1164 Sofia, Bulgaria}
\ead{peynov@fmi.uni-sofia.bg}
\cortext[cor1]{Corresponding author; Phone: +359\,2\,8161\,506, Fax: +359\,2\,868\,71.} 

\begin{abstract}
For functions defined on the $n$-dimensional hypercube $I_n (r) = \{{\bm{x}} \in \mathbb{R}^n ~\vert~ \vert x_i \vert \le r,~ i = 1, 2, \ldots , n\}$ and harmonic therein, we establish certain analogues of Gauss surface and volume mean-value formulas for harmonic functions on  the ball in $\mathbb{R}^n$ and their extensions for polyharmonic functions. The relation of these formulas to best one-sided  $L^1$-approximation by harmonic functions on $I_n (r)$ is also discussed.
\end{abstract}
%

\end{frontmatter}


\section{Introduction}

This note is devoted to formulas for calculation of integrals over the $n$-dimensional hypercube centered at $\bm{0}$
$$
 I_n:=I_n (r) := \{\bm{x} \in \mathbb{R}^n ~\vert~ \vert x_i \vert \le r,~ i = 1,2, \ldots , n\},~r>0,
$$
and its boundary $P_n:=P_n(r):=\partial I_n(r)$, based on integration over hyperplanar subsets of $I_n$ and  exact for harmonic or polyharmonic functions. 
They are presented in Section~\ref{sec-mean} and can be considered as natural analogues on $I_n$ of Gauss surface and volume mean-value formulas for harmonic functions (\cite{helms}) and Pizzetti formula \cite{pizz},\cite[Part IV, Ch. 3, pp. 287-288]{ch} for polyharmonic functions on the ball in $\mathbb{R}^n$.  Section~\ref{sec-appr} deals with  the best one-sided $L^1$-approximation by harmonic functions.

Let us  remind that a real-valued function $f$ is said to be {\em harmonic} ({\em polyharmonic of degree} $m\ge 2$) in a given domain $\Omega\subset\mathbb{R}^n$ if $f\in C^2(\Omega)$ ($f\in C^{2m}(\Omega)$) and $\Delta f = 0$ ($\Delta ^m  f = 0$) on $\Omega$, where $\Delta$ is the Laplace operator and $\Delta ^m$ is its $m$-th iterate
$$
\Delta f :=\sum_{i=1}^n \frac{\partial ^2 f}{\partial x_i^2},\quad
\Delta ^m  f :=\Delta(\Delta ^{m-1}  f).
$$

For any set $D \subset \mathbb{R}^n$ , denote by $\mathpzc{H}(D)$ ($\mathpzc{H}^m (D), m \ge 2$) the linear space of all functions that are harmonic (polyharmonic of degree $m$) in a domain containing $D$. 
The notation $d\lambda_n$ will stand for the Lebesgue measure in $\mathbb{R}^n$.

\section{Mean-value theorems}
{ \fontfamily{times}\selectfont
 \noindent
\label{sec-mean}                                                  
Let  $B_n (r) := \{\bm{x} \in \mathbb{R}^n ~\vert~
\Vert x\Vert :=\left ( \sum_{i=1}^n x_i^2\right ) ^{1/2}\le r \}$ and 
 $S_n (r) := \{\bm{x} \in \mathbb{R}_n ~\vert~
\Vert x\Vert = r \}$  
be  the  ball and the hypersphere in $\mathbb{R}^n$ with center ${\bm 0}$ and radius $r$. The following famous formulas are basic tools in harmonic function theory and state that for any function $h$ which is harmonic on $B_n(r)$ both the average over $S_n(r)$ and the average over $B_n(r)$ are equal to $h({\bm{0}})$.
\par\medskip\noindent
{\bf The surface mean-value theorem.} {\it If $h \in \mathpzc{H}(B_n (r))$, then
\be\label{Eq-surf}
\frac{1}{\sigma_{n-1} (S_n (r))}\int_{S_n (r)}h\,d\sigma_{n-1}=h({\bm{0}}),
\ee
where $d\sigma_{n-1} $ is the $(n-1)$-dimensional surface measure on the hypersphere $S_n (r)$.}
\par\medskip\noindent
{\bf The volume mean-value theorem.} {\it If $h \in \mathpzc{H}(B_n (r))$, then
\be\label{Eq-volume}
\frac{1}{\lambda_n (B_n (r))}\int_{B_n (r)}h\,d\lambda_{n}=h({\bm{0}}).
\ee}
\par\medskip
The balls are known to be the only sets in $\mathbb{R}^n$ satisfying the surface or the volume mean-value theorem. This means that if $\Omega\subset \mathbb{R}^n$ is a nonvoid domain with a finite Lebesgue measure and if there exists a point $\bm{x}_0\in \Omega$ such that $h(\bm{x}_0 ) = \frac{1}{\lambda_n(\Omega)}\int_\Omega h\,d\lambda_n$ for every function $h$ which is harmonic and integrable on $\Omega$, then $\Omega$ is an open ball centered at $\bm{x}_0$ (see \cite{ghr}). The mean-value properties can also be reformulated in terms of {\em quadrature domains} \cite{sakai}. Recall that $\Omega\subset \mathbb{R}^n$ is said to be a  quadrature domain for $\mathpzc{H}(\Omega)$, if $\Omega$ is a connected open set and there is a Borel measure $d\mu$ with a compact support $K_{\mu}\subset \Omega$ such that
$\int_{\bar{\Omega}}f\,d\lambda_n=\int_{K_\mu}f\,d\mu$  for every $\lambda_n$-integrable harmonic function $f$ on $\Omega$.
Using the concept of quadrature domains, the volume mean-value property is equivalent to the statement that any open ball in $\mathbb{R}^n$ is a quadrature domain and the measure $d\mu$ is the Dirac measure supported at its center.  On the other hand, no domains having "corners" are quadrature domains \cite{gss}. From this point of view, the open hypercube $I_n^\circ$ is not a quadrature domain. Nevertheless, here we prove that the closed hypercube $I_n$ is a quadrature set in an extended sense - there exists a measure $d\mu$ with a compact support $K_\mu$ having the above property with $\Omega$ replaced by $I_n$
 but the condition $K_\mu \subset I_n^\circ$ is violated exactly at the "corners" (Theorem~\ref{Th.1}). This property of $I_n$ is of crucial importance for the best one-sided $L^1$-approximation with respect to $\mathpzc{H}(I_n)$ (Section~\ref{sec-appr}).

Let us denote by $D^{ij}_n$ the $(n-1)$-dimensional hyperplanar segments of $I_n$ defined by
$$
D^{ij}_n:=D^{ij}_n(r):=\{\bm{x} \in  I_n  ~\vert~ \vert x_k\vert\le 
     \vert x_i \vert = \vert x_j \vert, ~k \neq i, j\}, \quad               1 \le i < j \le n.
$$
Denote also
 $$
\omega_k(\bm{x}):=\frac{(r-\hbox{\rm max}\,\{\vert x_1\vert,\vert x_2\vert,\ldots,\vert x_n\vert\})^k}{k!},\quad k\ge 0, 
 $$
 and $d\lambda^{\omega_k}_{m}:=\omega_k\,d\lambda_{m}$.
It can be calculated that 
$$
 \lambda^{\omega_k}_{n}(I_n)=2^{n}n!\frac{r^{n+k}}{(n+k)!},
\quad \lambda^{\omega_k}_{n-1}(P_n)=2^{n}n!\frac{r^{n+k-1}}{(n+k-1)!},  
$$
and
$$
\quad \lambda^{\omega_k}_{n-1}(D_n)=2^{n-1}n!\frac{r^{n+k-1}}{(n+k-1)!}, ~~\hbox{\rm where~~}D_n:=\cup_{1\le i<j\le n} D^{ij}_n.
$$
 \begin{figure}[ht!]
\begin{centering}
\includegraphics[width=0.8\hsize]{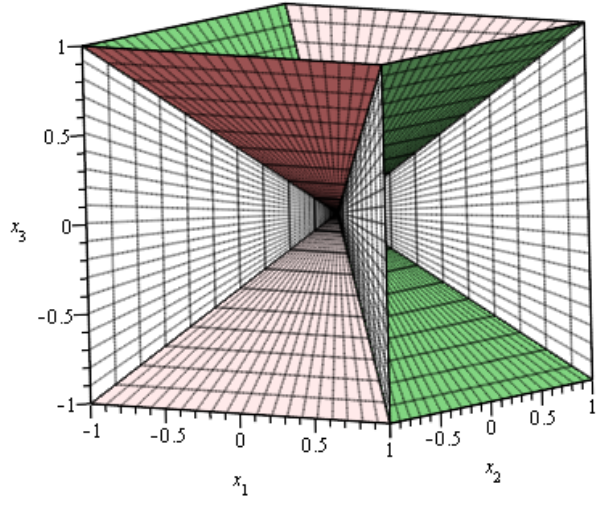}
\caption{\label{fig-mean} The sets $D^{12}_3(1)$ (white), $D^{13}_3(1)$ (green) and $D^{23}_3(1)$ (coral).}
\end{centering}
\end{figure}
The following holds true.
\begin{theorem}\label{Th.1}
If $h \in \mathpzc{H}(I_n)$, then  $h$ satisfies:
\\
\medskip\noindent
(i) {\bf Surface mean-value formula for the hypercube}
\be \label{Eq-surf-cube}
\frac{1}{\lambda_{n-1}(P_n)}\int_{P_n}h\,d\lambda_{n-1}
=\frac{1}{\lambda_{n-1}(D_n)} \int_{D_n}h\,d\lambda_{n-1},
\ee  
(ii) {\bf Volume mean-value formula for the hypercube}
\be \label{Eq-volume-cube}
\frac{1}{\lambda^{\omega_{k}}_{n}(I_n)}\int_{I_n}h\,d\lambda^{\omega_{k}}_{n}=\frac{1}{\lambda^{\omega_{k+1}}_{n-1}(D_n)} \int_{D_n} h\,d\lambda^{\omega_{k+1}}_{n-1},\quad k\ge 0.
\ee  
In particular, both surface and volume mean values of $h$ are attained on $D_n$.
\end{theorem}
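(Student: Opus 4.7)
The plan is to deduce both formulas from Green's second identity applied on a pyramidal decomposition of $I_n$. The key geometric observation is that $I_n$ is the union (up to a set of measure zero) of the $2n$ pyramids
$$
I_n^{i,\pm} \,:=\, \{\bm{x} \in I_n : \pm x_i = M(\bm{x})\}, \qquad M(\bm{x}) := \max_{1\le j\le n}|x_j|,
$$
each with apex $\bm{0}$ and base one of the $2n$ faces of $P_n$. The lateral (slant) faces of each pyramid lie on the diagonal hyperplanes $\{x_i = \pm x_j\}$, and their union over all pyramids is precisely $D_n$; each slant face is shared by exactly two neighbouring pyramids.

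For part (ii) with $k\ge 0$, the natural auxiliary function is $\phi := \omega_{k+2}$. Three facts are crucial. On the interior of each pyramid $I_n^{i,\pm}$, $\phi$ reduces to $(r-|x_i|)^{k+2}/(k+2)!$, which is a function of $x_i$ alone and satisfies $\Delta\phi = (r-|x_i|)^k/k! = \omega_k$. On $P_n$, $\phi$ and (for $k\ge 0$) also $\partial_\nu\phi$ vanish, because both carry the factor $(r-M)^{k+1}$. Across each shared slant face, $\phi$ is continuous, but its gradient jumps between $-\omega_{k+1}\bm{e}_i$ (on the $I_n^{i,\pm}$ side) and $-\omega_{k+1}\bm{e}_j$ (on the $I_n^{j,\pm}$ side); the outward normal derivative of $\phi$ from \emph{both} sides equals $+\omega_{k+1}/\sqrt{2}$, so the contributions add instead of cancelling. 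Applying Green's identity pyramid by pyramid and summing, the $\phi\,\partial_\nu h$ terms cancel pairwise on shared slant faces (by continuity of $\phi$ and smoothness of $h$), the $P_n$-boundary contributions vanish, and the $h\,\partial_\nu\phi$ terms accumulate to $\sqrt{2}\int_{D_n} h\,\omega_{k+1}\,d\mathcal{H}^{n-1}$ in Euclidean surface measure. Converting to the projected $(n-1)$-measure $d\lambda_{n-1}$ used by the paper (which differs from $d\mathcal{H}^{n-1}$ by a factor $\sqrt{2}$ on each $D_n^{ij}$), one obtains
$$
\int_{I_n} h\,\omega_k\,d\lambda_n \;=\; 2\int_{D_n} h\,\omega_{k+1}\,d\lambda_{n-1},
$$
which, together with the tabulated identity $\lambda^{\omega_k}_n(I_n) = 2\,\lambda^{\omega_{k+1}}_{n-1}(D_n)$, is exactly (\ref{Eq-volume-cube}).

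Formula (\ref{Eq-surf-cube}) follows from the same argument with $\phi := \omega_1 = r - M$. Now $\Delta\phi = 0$ inside every pyramid (so the volume sum collapses) and $\phi = 0$ on $P_n$, but $\partial_\nu\phi = -1$ on $P_n$ contributes $-\int_{P_n} h\,d\lambda_{n-1}$; the slant-face bookkeeping is identical, with $\omega_{k+1}$ replaced by $\omega_0 \equiv 1$, yielding $\int_{P_n} h\,d\lambda_{n-1} = 2\int_{D_n} h\,d\lambda_{n-1}$, which is (\ref{Eq-surf-cube}) after normalization by $\lambda_{n-1}(P_n) = 2\,\lambda_{n-1}(D_n)$.

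The main obstacle is the careful bookkeeping across the singular set $D_n$ of $\phi$: one must check that the two outward normal derivatives of $\phi$ at a shared slant face \emph{add} (producing a negative singular contribution to $\Delta\phi$ on $D_n$) rather than cancel, and one must track the factor $\sqrt{2}$ between the Euclidean and the projected $(n-1)$-dimensional measures. A compact distributional restatement of the whole picture is
$$
\Delta\omega_{k+2} \;=\; \omega_k\,\chi_{I_n} \;-\; 2\,\omega_{k+1}\,\delta_{D_n} \qquad(k\ge -1),
$$
from which both parts of Theorem~\ref{Th.1} follow immediately by pairing with $h$ via Green's identity on $I_n$, taking $k\ge 0$ for (\ref{Eq-volume-cube}) and $k=-1$ for (\ref{Eq-surf-cube}).
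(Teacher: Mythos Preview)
Your argument is correct. Both proofs ultimately exploit $\int_{I_n}\omega_{k}\,\Delta h\,d\lambda_n=0$ and shift the derivatives onto the weight, but the organization differs. The paper works coordinate by coordinate: it writes $\int_{I_n}\omega_k\Delta h=\sum_i\int\omega_k\,\partial_{x_i}^2 h$ and integrates by parts twice in each $x_i$ via Fubini, using that $\partial_{x_i}\omega_k$ vanishes on $\{|x_i|<M_i\}$ and equals $-\mathrm{sign}(x_i)\,\omega_{k-1}$ on $\{|x_i|>M_i\}$; the $D_n$-contribution appears from the inner endpoints $x_i=\pm M_i$ of the second integration by parts, with no explicit mention of pyramids, normals, or the factor $\sqrt{2}$. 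Your route---Green's second identity on each of the $2n$ pyramids, followed by bookkeeping of the jump of $\nabla\omega_{k+2}$ across the shared slant faces---is more geometric and packages the whole computation into the distributional identity $\Delta\omega_{k+2}=\omega_k\chi_{I_n}-2\,\omega_{k+1}\delta_{D_n}$, from which both parts follow at once. The paper's approach has the advantage that the projected measure $d\lambda_{n-1}$ on $D_n$ arises automatically from the Fubini parametrization, whereas you must explicitly identify it with $(1/\sqrt{2})\,d\mathcal{H}^{n-1}$ and carry the $\sqrt{2}$ through; conversely, your approach makes the geometric reason for the appearance of $D_n$ (as the singular support of $\Delta\omega_{k+2}$) transparent.
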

\begin{proof}
Set
$$
M_i:=M_i(\bm{x}):=\max_{j\neq i}\vert x_j\vert,
$$
and
$$
\bm{x}^i_{ t}:=(x_1,\ldots,x_{i-1}, t,x_{i+1},\ldots,x_n).
$$
 Using the harmonicity of $h$, we get for $k\ge 1$
 \begin{eqnarray*}
 0 &=& \int_{I_n }\Delta h\,d\lambda_{n}^{\omega_k}=
 \sum_{i=1}^n  \int_{I_n}\omega_k\frac{\partial ^2 h}{\partial x_i^2}
\,d\lambda_{n}\\
&=& -\sum_{i=1}^n  \int_{-r}^r \ldots \int_{-r}^r \frac{\partial \omega_k}{\partial x_i} (\bm{x})\frac{\partial h}{\partial x_i}(\bm{x}) \,dx_idx_1 \ldots dx_{i-1}dx_{i+1}\ldots dx_n\\
&=&- \sum_{i=1}^n  \int_{-r}^r \ldots \int_{-r}^r 
\left \{\left ( \int_{-r}^{-M_i}+
\int_{M_i}^r\right )\hbox{\rm sign}\,x_i \omega_{k-1}(\bm{x})
\frac{\partial h}{\partial x_i}(\bm{x})\,dx_i \right \}\\
&& \times dx_1 \ldots dx_{i-1}dx_{i+1}\ldots dx_n\\
&=&- \sum_{i=1}^n  \int_{-r}^r \ldots \int_{-r}^r \left \{
\int_{M_i}^r \omega_{k-1}(\bm{x}) \frac{\partial }{\partial x_i}  [h(\bm{x}^i_{-x_i})+h(\bm{x}) ]\,dx_i\right \}\\
&& \times dx_1 \ldots dx_{i-1}dx_{i+1}\ldots dx_n.
\end{eqnarray*}
Hence, we have
\begin{eqnarray}\label{Eq.5}
0&=&-\sum_{i=1}^n  \int_{-r}^r \ldots \int_{-r}^r  \{
h(\bm{x}^i_{-r})+h(\bm{x}^i_{+r})\\
&& -[ h(\bm{x}^i_{-M_i})+h(\bm{x}^i_{+M_i})] \}\,dx_1 \ldots dx_{i-1}dx_{i+1}\ldots dx_n \nonumber
\end{eqnarray}
if $k=1$ and 
\begin{eqnarray}\label{Eq.6}
0&=&-\sum_{i=1}^n  \int_{-r}^r \ldots \int_{-r}^r \int_{M_i}^{r}\omega_{k-2}(\bm{x}) [
h(\bm{x}^i_{-x_i})+h(\bm{x})]\,dx_i\nonumber\\
&&\times\,dx_1 \ldots dx_{i-1}dx_{i+1}\ldots dx_n\\
&&+ \sum_{i=1}^n  \int_{-r}^r \ldots \int_{-r}^r
\omega_{k-1}(\bm{x}^i_{+M_i})
 [ h(\bm{x}^i_{-M_i})+h(\bm{x}^i_{+M_i} )]\nonumber \\
&&\times\,dx_1 \ldots dx_{i-1}dx_{i+1}\ldots dx_n \nonumber
\end{eqnarray}
if $k\ge 2$.\\
Clearly, (\ref{Eq.5}) is equivalent to (\ref{Eq-surf-cube}) and from (\ref{Eq.6}) it follows
\be\label{QF-omega}
0=\int_{I_n }\Delta h\,d\lambda_{n}^{\omega_k}=\int_{I_n}h\,d\lambda^{\omega_{k-2}}_{n}
-2 \int_{D_n} h\,d\lambda^{\omega_{k-1}}_{n-1},
\ee
which is equivalent to (\ref{Eq-volume-cube}).
\end{proof}
\par
Let $M:=M(\bm{x}):=\max_{1\le i\le n}\vert x_i\vert$.
Analogously to the proof of Theorem~\ref{Th.1} (ii), Equation (\ref{QF-omega}) is generalized to:
 \begin{corollary} \label{Col.1} 
If $h \in \mathpzc{H}(I_n)$ and $\varphi \in C^2[0,r]$ is such that $\varphi (0)=0$ and $\varphi '(0)=0$, then
\be\label{QF-phi}
0=\int_{I_n }\varphi (r-M)\Delta h\,d\lambda_n=\int_{I_n}\varphi ''(r-M)h\,d\lambda_{n}
-2 \int_{D_n} \varphi ' (r-M) h\,d\lambda_{n-1}.
\ee
\end{corollary}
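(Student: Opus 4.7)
The plan is to mimic the proof of Theorem~\ref{Th.1}(ii) almost verbatim, with the weight $\omega_k$ replaced throughout by $\varphi(r-M)$. The outer equality $\int_{I_n}\varphi(r-M)\Delta h\,d\lambda_n=0$ is immediate from $h\in\mathpzc{H}(I_n)$, since $\Delta h\equiv 0$ pointwise on $I_n$ and the weight is irrelevant. The content of the corollary therefore lies entirely in the second equality, which I would establish by two integrations by parts in each coordinate, with the two hypotheses $\varphi(0)=0$ and $\varphi'(0)=0$ playing the role that the factorial structure of $\omega_k$ played before.

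The structural features of $\varphi(r-M)$ match those of $\omega_k$: on the set $\{|x_i|>M_i\}$ one has $M=|x_i|$, so $\partial[\varphi(r-M)]/\partial x_i = -\varphi'(r-|x_i|)\,\mathrm{sign}\,x_i$, while on $\{|x_i|<M_i\}$ the derivative vanishes because $M$ is independent of $x_i$ there. The first integration by parts of $\int_{I_n}\varphi(r-M)\,\partial_{x_i}^2 h\,d\lambda_n$ transfers one derivative onto the weight; the boundary contribution at $x_i=\pm r$ vanishes thanks to $\varphi(0)=0$. Pairing $\int_{-r}^{-M_i}$ with $\int_{M_i}^{r}$ and substituting $x_i\mapsto -x_i$ in the former, exactly as in the proof of Theorem~\ref{Th.1}(ii), reduces the $x_i$-piece to $\int_{M_i}^{r}\varphi'(r-t)\,\frac{d}{dt}[h(\bm{x}^i_{t})+h(\bm{x}^i_{-t})]\,dt$. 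A second integration by parts in $t$ then produces three contributions: the boundary at $t=r$ is annihilated by $\varphi'(0)=0$ (this is precisely where the second hypothesis is consumed), the boundary at $t=M_i$ yields $-\varphi'(r-M_i)[h(\bm{x}^i_{M_i})+h(\bm{x}^i_{-M_i})]$, and the interior contributes $+\int_{M_i}^{r}\varphi''(r-t)[h(\bm{x}^i_{t})+h(\bm{x}^i_{-t})]\,dt$.

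Summing over $i$, the interior terms reassemble into $\int_{I_n}\varphi''(r-M)\,h\,d\lambda_n$ (almost every $\bm{x}\in I_n$ is detected by the unique index at which $|x_i|$ realises $M$), while the boundary terms assemble into $2\int_{D_n}\varphi'(r-M)\,h\,d\lambda_{n-1}$, with the factor of $2$ reflecting the fact that each $\bm{x}\in D^{ij}_n$ is counted once from index $i$ and once from index $j$. The main obstacle is bookkeeping: keeping track of the signs generated by $\partial M/\partial x_i=\mathrm{sign}\,x_i$, the two integrations by parts, and the reflection $x_i\mapsto -x_i$, together with verifying the symmetry that produces the factor of $2$ in front of the $D_n$ integral. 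Since none of these manipulations uses anything about $\omega_k$ beyond its being a function of $r-M$ with $\omega_k|_{M=r}=\omega_{k-1}|_{M=r}=0$, the identical argument delivers (\ref{QF-phi}) under the hypotheses $\varphi(0)=\varphi'(0)=0$.
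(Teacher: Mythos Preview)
Your proposal is correct and is precisely the approach the paper has in mind: the paper does not spell out a separate argument for Corollary~\ref{Col.1} but simply states that it follows ``analogously to the proof of Theorem~\ref{Th.1}(ii),'' i.e., by repeating the two integrations by parts with $\omega_k$ replaced by $\varphi(r-M)$ and using $\varphi(0)=0$, $\varphi'(0)=0$ to kill the boundary terms at $|x_i|=r$. Your bookkeeping of the signs, the reflection $x_i\mapsto -x_i$, and the origin of the factor $2$ on the $D_n$ integral all match the computation leading to (\ref{Eq.6}) and (\ref{QF-omega}).
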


The volume mean-value formula (\ref{Eq-volume}) was extended by P. Pizzetti  to the following \cite{pizz,ch,boj}.
\par\medskip\noindent
{\bf The Pizzetti formula.} {\it If $g \in \mathpzc{H}^m(B_n(r))$, then
$$\label{Eq-pizz}
\int_{B_n(r)}g\,d\lambda_n=r^n\pi^{n/2}\sum_{k=0}^{m-1}
\frac{r^{2k}}{2^{2k}\Gamma(n/2+k+1)}\frac{\Delta^k g(\bm{0})}{k!}.
$$}
Here we present a similar formula for polyharmonic functions on the hypercube based on integration over the set $D_n$.
\begin{theorem} \label{Th.2}
 If $g\in \mathpzc{H}^m(I_n)$, $m\ge 1$, and $\varphi \in C^{2m}[0,r]$ is such that $\varphi ^{(k)} (0)=0$, $k=0,1,\ldots,2m-1$, then the following identity holds true for any $k\ge 0$:
\be\label{pizz}
\int_{I_n}\varphi ^{(2m)} (r-M) g \,d\lambda_n=2\sum_{s=0}^{m-1}
\int_{D_n}\varphi ^{(2s+1)}(r-M)\Delta^{m-s-1} g\,d\lambda_{n-1},
\ee
where $\varphi ^{(j)}(t)=\frac{d^j \varphi}{dt^j}  (t)$.
\end{theorem}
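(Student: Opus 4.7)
The plan is to prove (\ref{pizz}) by induction on $m$, using Corollary~\ref{Col.1} as the engine. For the base case $m=1$, the identity reduces to
\[
\int_{I_n}\varphi''(r-M)\,g\,d\lambda_n = 2\int_{D_n}\varphi'(r-M)\,g\,d\lambda_{n-1},
\]
which is exactly Corollary~\ref{Col.1} applied to the harmonic function $h=g$ under the hypothesis $\varphi(0)=\varphi'(0)=0$.

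For the inductive step the key auxiliary observation is that the chain of equalities in (\ref{QF-phi}) is really two assertions: the outer equality
\[
\int_{I_n}\varphi(r-M)\,\Delta h\,d\lambda_n = \int_{I_n}\varphi''(r-M)\,h\,d\lambda_n - 2\int_{D_n}\varphi'(r-M)\,h\,d\lambda_{n-1} \qquad (\star)
\]
follows from the integration-by-parts manipulations performed in Theorem~\ref{Th.1}(ii) \emph{alone}, and hence holds for any $h\in C^2(I_n)$ and any $\varphi\in C^2[0,r]$ with $\varphi(0)=\varphi'(0)=0$; harmonicity of $h$ is used only to conclude that the left-hand side vanishes. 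I will need $(\star)$ precisely in the non-harmonic case.

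Assume the theorem for $m-1$. Given $g\in\mathpzc{H}^m(I_n)$, the function $\Delta g$ lies in $\mathpzc{H}^{m-1}(I_n)$, and the hypothesis $\varphi^{(k)}(0)=0$ for $k\le 2m-1$ implies in particular the hypothesis at level $m-1$. The inductive hypothesis applied to $\Delta g$ therefore gives
\[
\int_{I_n}\varphi^{(2m-2)}(r-M)\,\Delta g\,d\lambda_n = 2\sum_{s=0}^{m-2}\int_{D_n}\varphi^{(2s+1)}(r-M)\,\Delta^{m-1-s}g\,d\lambda_{n-1}.
\]
On the other hand, applying $(\star)$ with $\varphi$ replaced by $\varphi^{(2m-2)}$ (which satisfies $\varphi^{(2m-2)}(0)=\varphi^{(2m-1)}(0)=0$ by hypothesis) and with $h=g$ yields
\[
\int_{I_n}\varphi^{(2m-2)}(r-M)\,\Delta g\,d\lambda_n = \int_{I_n}\varphi^{(2m)}(r-M)\,g\,d\lambda_n - 2\int_{D_n}\varphi^{(2m-1)}(r-M)\,g\,d\lambda_{n-1}.
\]
Equating the two right-hand sides and solving for $\int_{I_n}\varphi^{(2m)}(r-M)\,g\,d\lambda_n$, the extra term $2\int_{D_n}\varphi^{(2m-1)}(r-M)\,g\,d\lambda_{n-1}$ slots in exactly as the missing $s=m-1$ summand, closing the induction.

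The main---and essentially only---subtle point is the very first one: recognizing that the intermediate identity $(\star)$ underlying Corollary~\ref{Col.1} is valid without any harmonicity hypothesis on $h$. This is transparent from re-reading the integration-by-parts derivation used in the proof of Theorem~\ref{Th.1}(ii), after which the rest of the argument is a clean two-step induction that requires no new estimates.
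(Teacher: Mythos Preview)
Your proof is correct and takes essentially the same approach as the paper: both arguments rest on the observation that the integration-by-parts identity $(\star)$ holds for arbitrary $h\in C^2(I_n)$ (not just harmonic $h$), and then apply it $m$ times---the paper as a direct chain of equalities starting from $\int_{I_n}\varphi(r-M)\Delta^m g\,d\lambda_n=0$, you as a formal induction on $m$. The only cosmetic difference is the direction of the recursion; the content is identical.
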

\begin{proof}
Equation (\ref{pizz}) is a direct consequence from (\ref{QF-phi}): 
\begin{eqnarray*}
 0 &=& \int_{I_n }\varphi (r-M)  \Delta ^m g\,d\lambda_{n}
 \\
 &=& -2
\int_{D^{n}}\varphi ^{(1)} (r-M)\Delta^{m-1} g\,d\lambda_{n-1}+\int_{I_n}\varphi ^{(2)} (r-M)\Delta^{m-1}g\,d\lambda_{n}\\
&=&\ldots =-2\sum_{s=0}^{m-1}
\int_{D_n}\varphi ^{(2s+1)}\Delta^{m-s-1} g\,d\lambda_{n-1}+\int_{I_n}\varphi ^{(2m)} g\,d\lambda_{n}.
\end{eqnarray*}
\end{proof}

\section{A relation to best one-sided $L^1$-approximation by harmonic functions}
{ \fontfamily{times}\selectfont
 \noindent
\label{sec-appr}
Theorem~\ref{Th.1} suggests that for a certain positive cone  in $C(I_n )$ the set $D_n$ is a characteristic set for the best one-sided $L^1$-approximation with respect to $\mathpzc{H}(I_n)$ as it is explained and illustrated by the examples presented below. 

For a given $f\in C(I_n )$, let us introduce the following subset of 
$\mathpzc{H}(I_n)$:
$$
\mathpzc{H}_{-}(I_n,f):=\{h\in \mathpzc{H}(I_n)~|~h\le f \hbox{\rm ~on~} I_n\}.
$$
A harmonic function $h^f_*\in \mathpzc{H}_{-}(I_n,f)$ is said to be {\em a best one-sided $L^1$-approximant from below to $f$ with respect to $\mathpzc{H}(I_n)$} if
$$
\Vert f- h^f_*\Vert_1\le \Vert f- h\Vert_1\hbox{~~for every~~} h\in \mathpzc{H}_{-}(I_n,f),
$$                       
where
$$
\Vert g\Vert_1:=\int_{I_n}\vert g\vert \,d\lambda_{n}.
$$
Theorem~\ref{Th.1} (ii) readily implies the following (\cite{{ag,gss}}).
\begin{theorem} \label{Th.3} 
Let $f\in C(I_n)$ and $h^f_*\in \mathpzc{H}_{-}(I_n,f)$. Assume further that the set $D_n$ belongs to the zero set of the function $f-h^f_*$. Then $h^f_*$ is a best one-sided $L^1$-approximant from below to $f$ with respect to $\mathpzc{H}(I_n)$.
\end{theorem}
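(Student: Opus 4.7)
The plan is to reduce the one-sided $L^1$-approximation assertion to an application of the volume mean-value formula (\ref{Eq-volume-cube}) in its simplest case ($k=0$), used on the harmonic difference $h^f_* - h$.

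First I would exploit the fact that every element of $\mathpzc{H}_{-}(I_n,f)$ lies below $f$ on $I_n$. Thus for an arbitrary competitor $h \in \mathpzc{H}_{-}(I_n,f)$, both $f - h$ and $f - h^f_*$ are nonnegative on $I_n$, the absolute value in the definition of $\Vert \cdot\Vert_1$ may be dropped, and one obtains
\be\label{Eq-diffnorm}
\Vert f - h\Vert_1 - \Vert f - h^f_*\Vert_1 = \int_{I_n}(h^f_* - h)\,d\lambda_n.
\ee

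Next, I would observe that $h^f_* - h \in \mathpzc{H}(I_n)$ and apply Theorem~\ref{Th.1}(ii) with $k=0$; since $\omega_0\equiv 1$, the bulk measure on the left of (\ref{Eq-volume-cube}) reduces exactly to the Lebesgue measure appearing in (\ref{Eq-diffnorm}), yielding
$$
\int_{I_n}(h^f_* - h)\,d\lambda_n \;=\; \frac{\lambda_n(I_n)}{\lambda^{\omega_1}_{n-1}(D_n)}\int_{D_n}(h^f_* - h)\,d\lambda^{\omega_1}_{n-1}.
$$
The coefficient in front is a strictly positive constant, and the weight $\omega_1(\bm x) = r - \max_i|x_i|$ is nonnegative on $I_n$, so positivity of the right-hand integrand on $D_n$ transfers to positivity of the weighted integral.

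Finally, I would use the hypothesis that $D_n$ lies in the zero set of $f-h^f_*$: this means $h^f_* = f$ on $D_n$, whence $h^f_* - h = f - h \ge 0$ on $D_n$ by the admissibility of $h$. Combining the two displayed identities, the right-hand side of (\ref{Eq-diffnorm}) is nonnegative, so $\Vert f - h^f_*\Vert_1 \le \Vert f - h\Vert_1$ for every $h\in \mathpzc{H}_{-}(I_n,f)$, which is the desired optimality of $h^f_*$. There is no real obstacle here once the correct choice $k=0$ is made; the only items requiring a brief check are the nonnegativity of $\omega_1$ on $D_n$ and the harmonicity of the difference $h^f_*-h$, both of which are immediate.
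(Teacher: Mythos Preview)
Your argument is correct and is precisely the intended one: the paper states only that ``Theorem~\ref{Th.1}\,(ii) readily implies'' Theorem~\ref{Th.3}, and you have supplied exactly that implication by applying~(\ref{Eq-volume-cube}) with $k=0$ to the harmonic difference $h^f_*-h$ and using $h^f_*=f$ on $D_n$ together with $h\le f$.
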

\begin{corollary}\label{Col.2}  
If $f\in C^1(I_n)$, any solution $h$ of the problem 
\be\label{Eq-interp}
h_{\,|D_n} = f_{\,|D_n},~~\nabla h_{\,|D_n} = \nabla f_{\,|D_n},~~h\in \mathpzc{H_-}(I_n,f),
\ee
is a best one-sided $L^1$-approximant from below to $f$ with respect to $\mathpzc{H}(I_n)$.
\end{corollary}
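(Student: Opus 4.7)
The plan is to recognize that Corollary~\ref{Col.2} is an immediate operational reformulation of Theorem~\ref{Th.3}. First, I would observe that the three conditions on $h$ in problem~(\ref{Eq-interp}) already embed the entire hypothesis of Theorem~\ref{Th.3}: the third condition, $h\in \mathpzc{H}_{-}(I_n,f)$, supplies both $h\in\mathpzc{H}(I_n)$ and $h\le f$ on $I_n$, while the first condition, $h_{\,|D_n} = f_{\,|D_n}$, says exactly that $D_n$ is contained in the zero set of $f-h$. With these two observations in hand, the conclusion follows in one line by invoking Theorem~\ref{Th.3} with $h^f_* := h$.

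Having set out the main step, I would pause to explain why the gradient-matching condition $\nabla h_{\,|D_n} = \nabla f_{\,|D_n}$ appears in the statement at all, since it plays no role in the reduction above. The point, which I would record as a brief remark inside the proof, is one of \emph{consistency} rather than sufficiency: at any point $\bm{x}_0 \in D_n \cap I_n^\circ$, the nonnegative function $f-h$ attains a local minimum value $0$, so $\nabla(f-h)(\bm{x}_0) = \bm{0}$ automatically. Thus the gradient condition is forced at interior points of $D_n$ and represents a natural compatibility requirement ensuring that the one-sided Hermite interpolation problem~(\ref{Eq-interp}) is well posed; it does not strengthen the best-approximation conclusion itself.

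I do not anticipate a substantive obstacle in the proof of the corollary as stated, since it reduces to a single citation of Theorem~\ref{Th.3}. The real subtlety, which lies outside the statement but which I would flag for the reader, is the existence question: for which $f \in C^1(I_n)$ does a harmonic $h$ fulfilling all three parts of~(\ref{Eq-interp}) actually exist? That is where the corollary becomes a genuinely useful tool in approximation theory, and it is also where the geometric structure of $D_n$ identified in Theorem~\ref{Th.1}(ii) becomes indispensable.
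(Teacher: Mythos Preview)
Your proposal is correct and matches the paper's approach: the paper states Corollary~\ref{Col.2} without proof, treating it as an immediate consequence of Theorem~\ref{Th.3}, which is exactly the reduction you describe. Your additional remark clarifying the role of the gradient condition is accurate and helpful, though the paper itself does not comment on it.
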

\begin{corollary}\label{Col.3} 
If  $f(\bm{x})=g(\bm{x})\prod_{1\le i<j\le n}(x_i^2-x_j^2)^2$, where $g\in C(I_n)$ and $g\ge 0$ on $I_n$, then $h^f_*(\bm{x})\equiv 0$ is a best one-sided $L^1$-approximant from below to $f$ with respect to $\mathpzc{H}(I_n)$.
\end{corollary}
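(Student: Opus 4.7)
The plan is to apply Theorem~\ref{Th.3} directly with the candidate $h^f_*\equiv 0$. For this I need to verify two things: that $0\in \mathpzc{H}_-(I_n,f)$, and that $D_n$ lies in the zero set of $f-h^f_*=f$.

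The first condition amounts to checking that $f\ge 0$ on $I_n$. This is immediate from the product structure: every factor $(x_i^2-x_j^2)^2$ is nonnegative as it is a square, and $g\ge 0$ by assumption, so $f\ge 0$ pointwise. The constant function $0$ is trivially harmonic on $I_n$.

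For the second condition I would argue pointwise on $D_n=\bigcup_{1\le i<j\le n} D_n^{ij}$. If $\bm{x}\in D_n^{ij}$, then by definition $|x_i|=|x_j|$, so the particular factor $(x_i^2-x_j^2)^2$ in the product vanishes at $\bm{x}$. Hence the whole product vanishes, which forces $f(\bm{x})=0$. Thus $D_n\subset \{f=0\}=\{f-h^f_*=0\}$.

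Since both hypotheses of Theorem~\ref{Th.3} are satisfied, the conclusion follows. There is no real obstacle here — the corollary is essentially a direct check that the explicit Chebyshev-type multiplier $\prod_{i<j}(x_i^2-x_j^2)^2$ is tailor-made so that it is simultaneously nonnegative on $I_n$ and vanishes on the characteristic set $D_n$ identified in Theorem~\ref{Th.1}.
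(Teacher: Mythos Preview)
Your argument is correct and is precisely the intended one: the paper states Corollary~\ref{Col.3} without a separate proof because it is an immediate application of Theorem~\ref{Th.3}, and your two checks (that $f\ge 0$ and that each factor $(x_i^2-x_j^2)^2$ vanishes on $D_n^{ij}$) are exactly what is needed.
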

\begin{example}{\rm
Let $n=2$, $r=1$ and $f_1(x_1,x_2) =  x_1^2x_2^2$. By Corollary~\ref{Col.2},  the solution $h^{f_1}_*(x_1,x_2)=-x_1^4/4+\frac{3}{2}x_1^2x_2^2-x_2^4/4$ of the interpolation problem (\ref{Eq-interp}) with $f=f_1$
is  a best one-sided $L^1$-approximant from below to $f_1$ with respect to $\mathpzc{H}(I_2)$ and $\Vert f_1-h^{f_1}_*\Vert_1 =8/45$. 
Since the function $f_1$ belongs to the positive cone of the partial differential operator $\mathcal{D}^4_{2,2}:=\frac{\partial ^4}{\partial x_1^2\partial x_2^2}$ (that is, $\mathcal{D}^4_{2,2}f_1>0$), one can compare the best harmonic one-sided $L^1$-approximation to $f_1$ with the corresponding approximation from the linear subspace of $C(I_2)$:
\begin{eqnarray*}
\mathpzc{B}^{2,2}(I_2):=\{b\in C(I_2)~\vert~b(x_1,x_2)=\sum_{j=0}^1[a_{0j}(x_{1})x_2^j+a_{1j}(x_{2})x_1^j]\}.
\end{eqnarray*}
 The  possibility for explicit constructions  of best one-sided $L^1$-approximants from  $\mathpzc{B}^{2,2}(I_2)$, is studied in \cite{dp-jat}.  The functions $f_1-b^{f_1}_*$ and $f_1-b_{f_1}^*$, where $b^{f_1}_*$ and $b_{f_1}^*$ are the unique best one-sided $L^1$-approximants  to $f_1$ with respect to $\mathpzc{B}^{2,2}(I_2)$ from below and above,  respectively, play the role of basic error functions of the canonical one-sided $L^1$-approximation by elements of $\mathpzc{B}^{2,2}(I_2)$. 
For instance, $b^{f_1}_*$ can be constructed as the unique interpolant to $f_1$ on the boundary $\Diamond:=\{(x_1,x_2)\in I_2~|~\vert x_1\vert +\vert x_2\vert=1\}$ of the inscribed square and $\Vert f_1-b^{f_1}_*\Vert_1=14/45$  (Fig.~\ref{fig-f1}).
\begin{figure}[ht!]
\begin{centering}
\includegraphics[width=0.48\hsize]{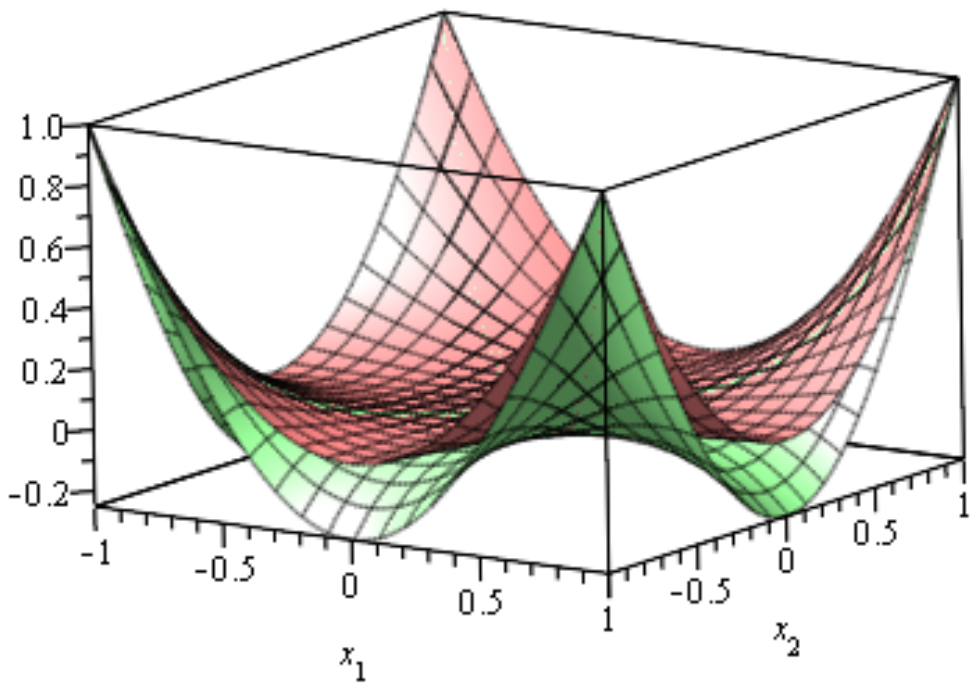}\hfill
\includegraphics[width=0.48\hsize]{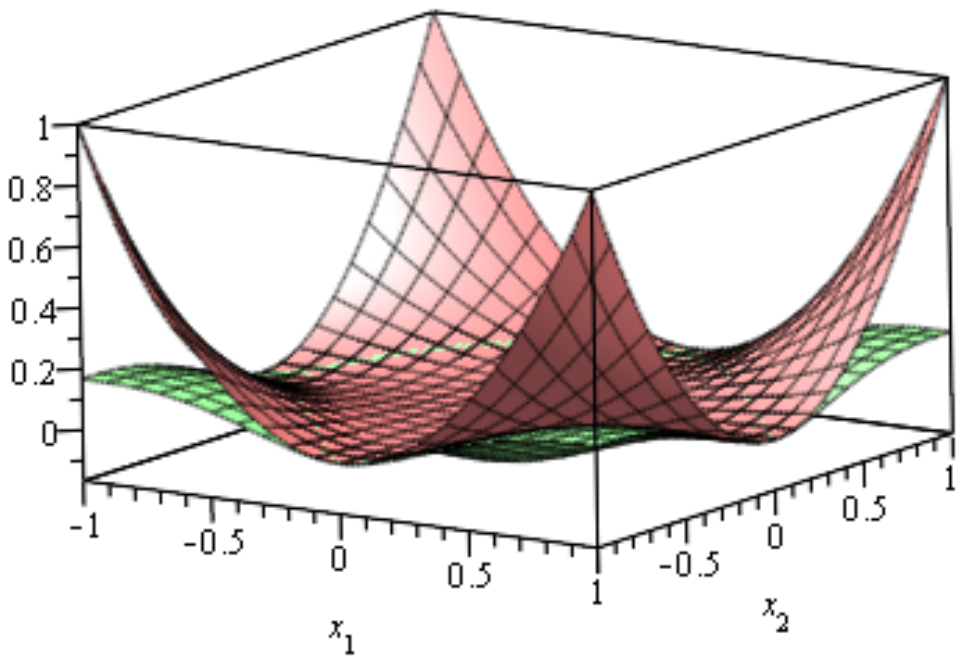}
\caption{\label{fig-f1} The graphs of the function $f_1(x_1,x_2)=  x_1^2x_2^2$ (coral) and its best one-sided $L^1$-approximants from below, $h^{f_1}_*$ with respect to $ \mathpzc{H}(I_2)$ (left) and $b^{f_1}_*$ with respect to $ \mathpzc{B}^{2,2}(I_2)$ (right).}
\end{centering}
\end{figure}}
\end{example}
\begin{example}{\rm
Let $n=2$, $r=1$ and $f_2(x_1,x_2) = x_1^8+14x_1^4x_2^4+x_2^8 $. 
The solution  
$h^{f_2}_*(x_1,x_2)=x_1^8+x_2^8-28(x_1^6x_2^2+x_1^2x_2^6)+70x_1^4x_2^4$ of 
 (\ref{Eq-interp}) with $f=f_2$ is  a best one-sided $L^1$-approximant from below to $f_2$ with respect to $\mathpzc{H}(I_2)$ and $\Vert f_2-h^{f_2}_*\Vert =8/75$. It can also be verified that $\Vert f_2-b^{f_2}_*\Vert =121/900$ (see Fig.~\ref{fig-f2}).
\begin{figure}[ht!]
\begin{centering}
\includegraphics[width=0.48\hsize]{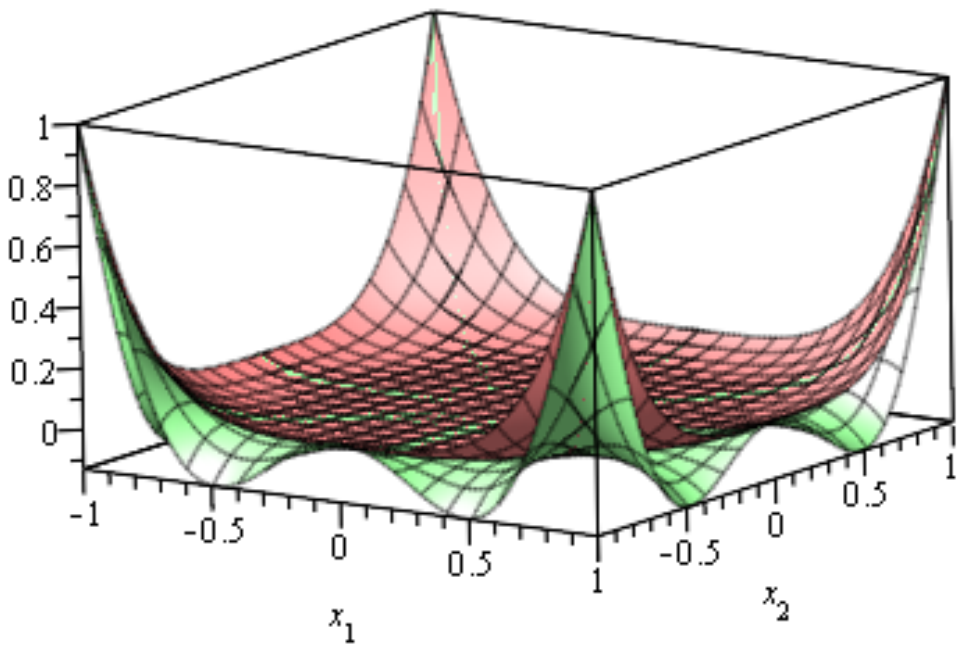}\hfill
\includegraphics[width=0.48\hsize]{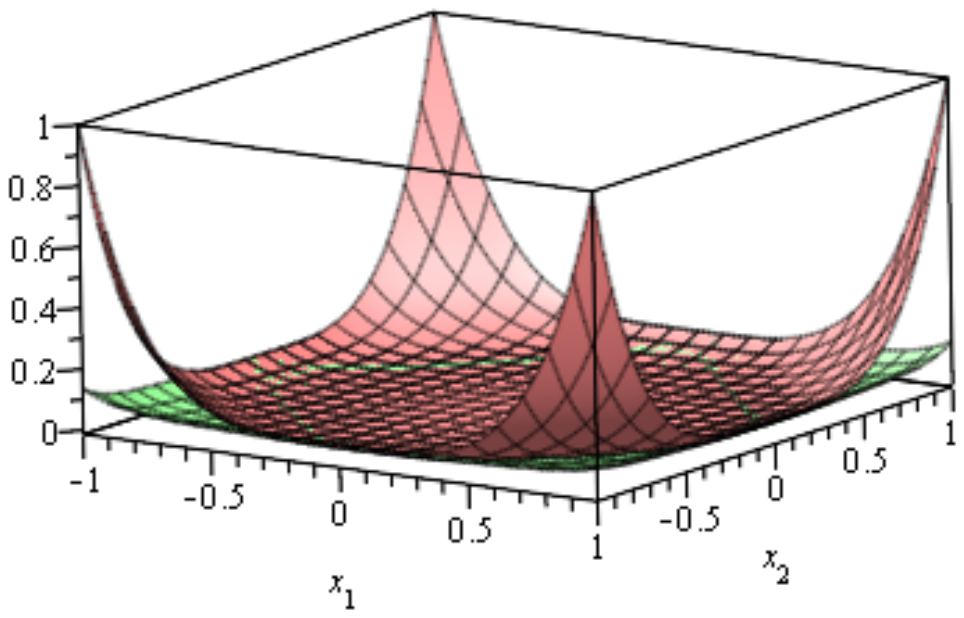}
\caption{\label{fig-f2} The graphs of the function $f_2(x_1,x_2) = x_1^8+14x_1^4x_2^4+x_2^8$ (coral) and its best one-sided $L^1$-approximants from below, $h^{f_2}_*$ with respect to $\mathpzc{H}(I_2)$ (left) and $b^{f_2}_*$ with respect to $ \mathpzc{B}^{2,2}(I_2)$  (right).}
\end{centering}
\end{figure}}
\end{example}
\begin{remark} {\rm
Let $\varphi \in C^2[0,r]$ is such that $\varphi (0)=0$, $\varphi '(0)=0$, and $\varphi '\ge 0$, $\varphi ''\ge 0$ on $[0,r]$.
It follows from (\ref{QF-phi}) that Theorem \ref{Th.3} also holds for the best weighted $L^1$-approximation from below with respect to $\mathpzc{H}(I_n)$ with weight $\varphi '' (r-M)$. The smoothness requirements were used for brevity and wherever possible they can be weakened in a natural way. }
\end{remark}

%

\end{document}